\DeclareMathOperator{\partrank}{pr}
\DeclareMathOperator{\polyrank}{rk}
\DeclareMathOperator{\characteristic}{char}
\begin{document}

\title{On subtensors of high partition rank} 

\author{Jan Draisma}
\address{Mathematical Institute, University of Bern, Sidlerstrasse 5, 3012
Bern, Switzerland; and Department of Mathematics and Computer Science,
P.O. Box 513, 5600 MB, Eindhoven, the Netherlands}
\email{jan.draisma@unibe.ch}

\author{Thomas Karam}
\address{Mathematical Institute, University of Oxford,
Andrew Wiles Building, Radcliffe Observatory Quarter,
Woodstock Road, Oxford, OX2 6GG, United Kingdom}
\email{thomas.karam@maths.ox.ac.uk}

\thanks{JD is partially supported by Swiss National Science Foundation
(SNSF) project grant 200021\_191981 and by Vici grant 639.033.514 from the
Netherlands Organisation for Scientific Research (NWO). He thanks the
Institute for Advanced Study for excellent working conditions,
under which part of this project was carried out. TK is supported by the 
European Research Council (ERC) grant 883810. He thanks the Mathematical 
Institute, University of Oxford, for the very pleasant research environment, and thanks 
the CRM Montreal for organising the conference ``Tensors: Quantum Information, 
Complexity and Combinatorics" during which several of the main ideas of the present 
paper arose.}

\begin{abstract}
We prove that for every positive integer $d \ge 2$ there exist polynomial functions 
$F_d, G_d: \mathbb{N} \to \mathbb{N}$ such that for each positive integer $r$, 
every order-$d$ tensor $T$ over an arbitrary field and with partition rank at least 
$G_d(r)$ contains a $F_d(r) \times \cdots \times F_d(r)$ subtensor with partition rank 
at least $r$. We then deduce analogous results on the Schmidt rank 
of polynomials in zero or high characteristic.\end{abstract}

\maketitle

\tableofcontents

\section{Introduction and main result}

Let $K$ be a field, let $n_1, n_2, r$ be positive integers and let $A \in K^{n_1} 
\otimes K^{n_2}$ be a matrix
of rank $r$. Then $A$ contains a $r \times r$-submatrix of rank equal
to $r$. We want to prove a similar statement for order-$d$ tensors and
partition rank.

Let $n_1,\ldots,n_d$ be positive integers, and let $T \in K^{n_1} \otimes
\cdots \otimes K^{n_d}$ be a tensor. Then the {\em partition rank} of
$T$, introduced by Naslund \cite{Naslund20} and denoted $\partrank(T)$, is 
the smallest nonnegative integer $r$ such that $T$ can be
written as a sum of $r$ terms of the form $A \otimes B$ where $A
\in \bigotimes_{i \in I} K^{n_i}$ and $B \in \bigotimes_{i \not \in I}
K^{n_i}$ and $I$ is a proper subset of $[d]:=\{1,\ldots,d\}$ containing
$1$. We stress that $I$ may vary among the $r$ terms. For $d=2$, though,
$\{1\}$ is the only possible value for $I$, and it follows that then
$\partrank(T)$ is the matrix rank of $T$. Our paper will be
concerned mostly with $d \geq 3$. 

If $T$ is an order-$d$ tensor and $X_1, \dots, X_d$ are subsets of 
$[n_1], \dots, [n_d]$ respectively then we write $T[X_1, \dots, X_d]$ 
for the subtensor of $T$ obtained by restricting the entries of $T$ to the 
product $X_1 \times \dots \times X_d$. Set $r:=\partrank(T)$. We want to show that $T$ 
contains a subtensor of size some (not too large) function of $r$ and partition rank 
at least some (not too small) function of $r$.

The contrapositive says that if all small subtensors of $T$ have bounded
partition rank, then $T$ itself has bounded partition rank. This is what we
will prove.

\begin{thm}\label{thm: Bounds on subtensors}

Let $d \ge 2$ be a positive integer. There exist functions $F_d, G_d: \NN \to \NN$ 
such that if $r \ge 1$ is a positive integer and $T$ is an order-$d$ tensor over an arbitrary field such that every 
$F_d(r) \times \dots \times F_d(r)$ subtensor of $T$ has partition rank at most $r$, 
then $T$ has partition rank at most $G_d(r)$. Furthermore, we may take the bounds 
$F_d(r) \le (2^{d+3}r)^{2d}$ and $G_d(r) \le (2^{d+3}r)^{2d^2}$ for all $r$. 
\end{thm}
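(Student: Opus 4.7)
My plan is to proceed by induction on the order $d$. The base case $d=2$ is immediate: partition rank coincides with matrix rank, so a matrix of rank at least $r+1$ contains an $(r+1) \times (r+1)$ submatrix of rank $r+1$, and we may take $F_2(r) = r+1$, $G_2(r) = r$.

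For $d \ge 3$, I argue by contrapositive: given $T$ with $\partrank(T) > G_d(r)$, I want to find subsets $X_i$ of sizes at most $F_d(r)$ with $\partrank(T[X_1, \ldots, X_d]) > r$. The strategy is to slice along one coordinate, say the $d$-th, and reduce to the inductive hypothesis at order $d-1$. Writing $T_{i_d} := T[\cdot, \ldots, \cdot, \{i_d\}]$ for the $i_d$-th slice, the crucial step is to find a subset $X_d \subseteq [n_d]$ of controlled size, together with scalars $(\lambda_{i_d})_{i_d \in X_d}$, such that the order-$(d-1)$ tensor $S := \sum_{i_d \in X_d} \lambda_{i_d} T_{i_d}$ has $\partrank(S) > G_{d-1}(r)$. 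Granting this, the inductive hypothesis supplies $X_1, \ldots, X_{d-1}$ of size at most $F_{d-1}(r)$ with $\partrank(S[X_1, \ldots, X_{d-1}]) > r$; and a direct check (contracting a partition-rank decomposition of $T[X_1, \ldots, X_d]$ against $(\lambda_{i_d})$) shows $\partrank(T[X_1, \ldots, X_d]) \ge \partrank(S[X_1, \ldots, X_{d-1}]) > r$, since a partition of $[d]$ containing $1$ projects to a partition of $[d-1]$ containing $1$ regardless of the side on which $d$ lies.

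The main obstacle, and the heart of the proof, is this slice-extraction step: that high partition rank of $T$ forces some \emph{small-support} linear combination of the slices $T_{i_d}$ to have comparably high partition rank. The natural argument is a dichotomy on the $d$-flattening rank $m$ (the dimension of the span of the $T_{i_d}$). If $m$ is small, then $T = \sum_{k \le m} U_k \otimes v_k$ for some $U_k$ in the slice-span and $v_k \in K^{n_d}$, forcing $\partrank(U_k) \ge \partrank(T)/m$ for some $k$; choosing $X_d$ to consist of $m$ indices along which the slices span the slice-span realises $U_k$ as the desired linear combination. If $m$ is large, one needs a more delicate argument ruling out that all small-support combinations of slices have low partition rank — in effect, that high partition rank cannot be diffused uniformly across many slices — and here I expect a greedy/iterative extraction that at each step either increases partition rank or certifies a low-rank decomposition. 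Tracking the polynomial blow-up at each of the $d$ inductive levels, with a degree-$\sim 2d$ factor per level, should yield the stated bounds $F_d(r) \le (2^{d+3}r)^{2d}$ and $G_d(r) \le (2^{d+3}r)^{2d^2}$.
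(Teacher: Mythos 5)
Your approach is fundamentally different from the paper's. The paper does not induct on $d$ at all: it first produces (by a dimension count, Theorem~\ref{thm:Boundm}) a nonzero polynomial $f$ of controlled degree $m$ that vanishes on all complex tensors of partition rank at most $r$, then reduces $f$ to a multilinear weight vector supported on $m\times\cdots\times m$ tensors, and finally (Proposition~\ref{prop:Bound}) solves the resulting polynomial identity for one tensor entry to build an explicit partition-rank decomposition of $T$ outside a small box. Your plan is an inductive slicing argument. Unfortunately it has two serious problems, one of which is fatal as written.

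The fatal one is the step you describe as ``a direct check.'' You claim that if $S=\sum_{i_d\in X_d}\lambda_{i_d}T_{i_d}$ then $\partrank\bigl(T[X_1,\ldots,X_d]\bigr)\ge\partrank\bigl(S[X_1,\ldots,X_{d-1}]\bigr)$, on the grounds that a partition of $[d]$ containing $1$ projects to a partition of $[d-1]$ containing $1$ ``regardless of the side on which $d$ lies.'' This is false when $d$ sits alone on the side not containing $1$: a summand $A\otimes B$ with $A\in\bigotimes_{i\in[d-1]}K^{n_i}$ and $B\in K^{n_d}$ contracts under $\lambda$ to the scalar multiple $\langle B,\lambda\rangle\,A$, where $A$ is a completely general order-$(d-1)$ tensor, not a partition-rank-one term. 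Concretely, for $d=3$ take $T=A\otimes e_1$ with $A$ the $n\times n$ identity; then $\partrank(T)=1$, but the slice $T_1=A$ has rank $n$. So contracting a partition-rank decomposition of $T$ does \emph{not} bound the partition rank of $S$, and the inductive reduction collapses.

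The second problem, which you do acknowledge, is the slice-extraction lemma itself: that whenever $\partrank(T)$ is large, some linear combination of slices supported on a small index set has large partition rank. Your treatment of the small-flattening-rank case is fine, but for the large-flattening-rank case you only gesture at ``a greedy/iterative extraction that at each step either increases partition rank or certifies a low-rank decomposition'' and say you ``expect'' it to work. This is precisely where the difficulty lies, and it is not clear such an argument exists in this form over an arbitrary field; the paper sidesteps it entirely by working with polynomial equations whose coefficients live over $\ZZ$. As it stands, the proposal is a plausible-looking plan with one incorrect step and one unproven key lemma, and does not constitute a proof.
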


We remark that the bounds on the quantities $F_d(r)$ and $G_d(r)$ in
the theorem hold for any field $K$, though it is conceivable
that ``optimal'' functions $F_d$ and $G_d$ do depend on $K$. Furthermore, we 
believe that it is likely that Theorem \ref{thm: Bounds on subtensors} could still be 
true with both bounds $F_d(r)$ and $G_d(r)$ taken to grow linearly in $r$ for every 
fixed $d$, although we do not have a proof of that. In Section \ref{sec:ThreeTensors} 
we shall prove that we may take $F_3(r) = O(r^{3/2})$ 
and $G_3(r) = O(r^3)$.

The proof is inspired by an attempt to extend the following matrix argument
to higher-dimensional tensors. If $A$ is a matrix and $r$ is the largest nonnegative 
integer such that there exists an $r \times r$ submatrix $A[X,Y]$ of $A$
with rank $r$, then for every $x \in X^c$ and $y \in Y^c$ we have that
$\det A[X,Y] \neq 0$ but $\det A[X \cup \{x\}, Y \cup \{y\}] = 0$, so we can
express all coefficients $A(x,y)$ with $x \in
X^c$ and $y \in Y^c$ in the simple way \begin{equation} A(x,y) = A[\{x\}, Y] A[X,Y]^{-1}
A[X,\{y\}] \label{expression for matrices} \end{equation} in terms of the entries in the $r$
rows $A[\{x\},[n_2]]$ with $x \in X$ and the $r$ columns $A[[n_1],\{y\}]$
with $y \in Y$; this expression in turn this shows that the matrix $A[X^c,
Y^c]$ has rank at most $r$. Although the resulting bound is not optimal,
this argument allows us to deduce that $A$ must have rank at
most $3r$, since each of the $r$ rows $A[\{x\},[n_2]]$ with
$x \in X$ and each of the $r$ columns $A[[n_1],\{y\}]$ with
$y \in Y$ has rank at most $1$. We may hence try
to imitate this argument for the partition rank of order-$d$ tensors,
and ask the following question.

\begin{que} \label{bounded rank for the outside block} Let $d \ge 2$, $r \ge 1$ be positive integers. Does there exist a positive integer $C_d(r)$ 
satisfying the following ? If $T \in K^{n_1} \otimes
\cdots \otimes K^{n_d}$ is an order-$d$
tensor, $X_1, \dots, X_d$ are subsets of $[n_1], \dots, [n_d]$ respectively, each 
with size $r$, such that $\partrank T[X_1, \dots, X_d]= r$ and \[\partrank
T[X_1 \cup \{x_1\}, \dots, X_d \cup \{x_d\}] =r\] is satisfied for all
$x_1 \in [n_1] \setminus X_1, \dots, x_d \in [n_d] \setminus X_d$, then we have
\[\partrank T[[n_1] \setminus X_1, \dots, [n_d] \setminus X_d] \le C_d(r).\] \end{que}

As with the matrix argument, because every order-$(d-1)$
slice of $T$ has partition rank at most $1$, a positive
answer to Question \ref{bounded rank for the outside block} implies Theorem 
\ref{thm: Bounds on subtensors} with $F_d(r) = r$ and $G_d(r) = C_d(r) + dr$. In 
particular, it would suffice that $C_d$ be linear in $r$ for $G_d$ to be linear in $r$.

It is however not obvious to us that Question \ref{bounded rank
for the outside block} has a positive answer. Unlike in the case of
matrices, where all smallest-length rank decompositions of a full-rank matrix can be
deduced from one another via a change of basis, the set of partition
rank decompositions of a given full-rank tensor is richer in general,
already in the $d=3$ case; this makes it harder to obtain an analogue
of the expression \eqref{expression for matrices}.

In the absence of such a direct expression for $T(x_1, \dots, x_d)$, we
may ask for a weaker description: an equation of which $T(x_1, \dots,
x_d)$ is a solution, which leads the way to the
arguments that our proof will involve. A second difficulty which we have to circumvent 
is that we do not know that the set of tensors in $K^{n_1} \otimes
\cdots \otimes K^{n_d}$ with partition rank at most $r$ is
Zariski-closed in general: although this is true for algebraically
closed fields $K$ and $d=3$ (as shown by Sawin and Tao \cite{Tao16}),
this is likely false in general, even over algebraically closed $K$.
(Indeed, the corresponding notion of bounded strength for quartics is
not closed \cite{Ballico22}.) This will nonetheless
not interfere with our argument, as it will suffice for us to find a polynomial 
for which the zero-set merely contains the set of tensors in $K^{n_1} \otimes
\cdots \otimes K^{n_d}$ with partition rank at most $r$ rather than being equal to it. Indeed, our main stepping 
stone towards proving Theorem \ref{thm: Bounds on subtensors} will be the following statement.

\begin{thm} \label{thm:Bound}
Let $d \ge 2$, $r \ge 1$ be positive integers. Let $m \ge 1$ be a positive integer such that
there exist positive integers $n_1,\ldots,n_d$ and a nonzero polynomial $f$ of degree $m$
in $\CC[x_{i_1,\ldots,i_d} \mid i_j \in [n_j]]$ that vanishes on all
tensors in $\CC^{n_1} \otimes \cdots \otimes \CC^{n_d}$ of partition
rank at most $r$.

Then for any field $K$, any positive integers $n_1,\ldots,n_d \geq m$, and any tensor $T \in
K^{n_1} \otimes \cdots \otimes K^{n_d}$, if all $m \times \cdots \times
m$-subtensors of $T$ have partition rank at most $r$, then $T$ has
partition rank at most
\[ d(m-1) + \sum_{s=1}^{\lfloor d/2 \rfloor}
\binom{d}{s}(m-1)^{d-s}, \]
which for $d \geq 3$ is at most $m^d$. 
\end{thm}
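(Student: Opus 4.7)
The plan has two phases: a geometric ``frame decomposition'' that contributes $d(m-1)$ to the bound, followed by an algebraic analysis of the residual core subtensor using the polynomial $f$.

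\textbf{Phase 1 (Reducing $f$ and peeling off slices).}
First, by picking a monomial of $f$ of degree exactly $m$ and restricting $f$ to the variables it uses (padding with ignored variables as necessary), one may assume $f$ is a nonzero polynomial of degree $m$ in the $m^d$ variables $x_{i_1,\ldots,i_d}$ with $i_j \in [m]$, still vanishing on partition-rank-at-most-$r$ tensors by an extension-by-zero argument. Since the Zariski closure of that locus is defined over $\mathbb{Z}$, we may take $f$ with integer coefficients, so the vanishing extends to any field. Next, fix subsets $X_j \subseteq [n_j]$ with $|X_j|=m-1$ and let $Y_j:=[n_j]\setminus X_j$. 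A greedy peeling along the $d$ directions (at step $j$, subtract $m-1$ partition-rank-$1$ slice terms to zero out entries with $i_j \in X_j$) decomposes $T$ into $d(m-1)$ partition-rank-$1$ terms plus a residual tensor supported on $Y_1 \times \cdots \times Y_d$ and coinciding there with $T[Y_1,\ldots,Y_d]$, giving
\[
\partrank(T) \le d(m-1) + \partrank(T[Y_1,\ldots,Y_d]).
\]

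\textbf{Phase 2 (Algebraic bound on the core).}
For each $y=(y_1,\ldots,y_d) \in Y_1 \times \cdots \times Y_d$, the $m \times \cdots \times m$ subtensor $T_y := T[X_1 \cup \{y_1\},\ldots,X_d \cup \{y_d\}]$ has partition rank at most $r$ by hypothesis, so $f(T_y)=0$. Label $X_j \cup \{y_j\}$ as $[m]$ via $y_j \leftrightarrow m$, and define the \emph{type} of a position $(i_1,\ldots,i_d) \in [m]^d$ to be $J:=\{j : i_j = m\}$; the corresponding entry of $T_y$ then depends on $y$ only through $y_J$. Grouping the monomials of $f$ by the union of the types of their variables, we obtain
\[
f(T_y) = \sum_{J \subseteq [d]} F_J(y_J) = 0,
\]
where $F_J$ depends on $y$ only through $y_J$. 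The key idea is that the entry $T(y_1,\ldots,y_d)$ (of type $[d]$) appears in $F_{[d]}$ only through the variable $x_{m,\ldots,m}$ (after a polarization or differentiation step to linearize $f$ in that variable); all remaining contributions to $F_{[d]}$ come from ``mixed'' monomials whose variables have types that are proper subsets of $[d]$ but whose union equals $[d]$. Each such mixed monomial factors into two groups of variables whose type-unions form a nontrivial bipartition $\{I, I^c\}$, yielding a partition-rank-$1$ term of shape $(I, I^c)$. A count using that $\deg f = m$, that each variable of type $J$ corresponds to $(m-1)^{d-|J|}$ frame-index choices, and that one may take $|I| \le \lfloor d/2 \rfloor$ per bipartition, bounds the number of distinct rank-$1$ terms per bipartition with $|I|=s$ by $(m-1)^{d-s}$. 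Summing over the $\binom{d}{s}$ bipartition shapes with $s \le \lfloor d/2 \rfloor$ yields
\[
\partrank(T[Y_1,\ldots,Y_d]) \le \sum_{s=1}^{\lfloor d/2 \rfloor} \binom{d}{s}(m-1)^{d-s},
\]
and combining with Phase 1 gives the stated bound.

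\textbf{Main obstacle.}
The principal difficulty is the extraction of the partition rank decomposition from the identity $\sum_J F_J(y_J)=0$. This will require: (i) a linearization of $f$ in the deepest variable $x_{m,\ldots,m}$, via polarization/differentiation or by working with an $f$ of minimal degree, that preserves the vanishing on partition-rank-$\le r$ tensors; (ii) a M\"obius-style inclusion--exclusion in the subset lattice of $[d]$ to isolate each bipartition's contribution; and (iii) the combinatorial count verifying the sharp bound of $(m-1)^{d-|I|}$ distinct rank-$1$ tensors per bipartition shape $(I, I^c)$.
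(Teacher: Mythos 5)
Your high-level plan correctly matches the paper's structure: peel off boundary slices contributing $d(m-1)$, then use the vanishing of $f$ on each $m\times\cdots\times m$ subtensor to constrain the core. However, there are two genuine gaps, both of which you partially flag in your "main obstacle" paragraph but do not resolve.

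First, your reduction of $f$ in Phase 1 — picking a degree-$m$ monomial and restricting to its variables — does not give the multilinear structure that the argument actually requires. The paper proves (via Lie algebra raising operators $E_{j,n_i+1}$ acting on $f$, which preserve vanishing on rank-$\le r$ tensors because that set is $\GL$-stable) that one may assume $f$ is a weight vector of weight $(1^m,\ldots,1^m)$. This means that \emph{every} monomial of $f$ uses exactly one variable with $i_1=m$, exactly one with $i_2=m$, and so on. In particular $f$ is automatically \emph{linear} in $x_{m,\ldots,m}$, and the structure of the coefficient ("type $[d]$ but a proper subset of the variables having the top index in each position") is exactly controlled. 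Your hand-wave about "polarization or differentiation" to linearize $f$ does not obviously preserve the property of vanishing on partition-rank-$\le r$ tensors, whereas Lie algebra differentiation does because the locus is invariant under the $\GL$ action; this distinction is the point of the paper's lemma.

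Second, and more seriously: even granting linearity of $f$ in $x_{m,\ldots,m}$, you cannot directly solve your equation $\sum_J F_J(y_J)=0$ for $T(y_1,\ldots,y_d)$, because the coefficient of $x_{m,\ldots,m}$ (the polynomial the paper calls $h_1$, depending only on the frame $T[X_1,\ldots,X_d]$) may well vanish for \emph{all} choices of frame. The paper handles this with a cascade: it recursively writes $h_k = x_{m-k,\ldots,m-k}\,h_{k+1} + r_{k+1}$, all again of the appropriate weight, and chooses the unique $k$ where the entire symmetric-group orbit of $h_k$ vanishes on $T$ but that of $h_{k+1}$ does not (this $k$ exists since $h_0=f$ has vanishing orbit and $h_m$ is a nonzero constant). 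The proof then solves for $t_{m-k,\ldots,m-k}$ and applies the subgroup $G'=\prod_i\Sym([n_i]\setminus[m-k-1])$ of permutations (not a M\"obius inversion) to obtain a formula valid for \emph{all} outer entries simultaneously, with coefficients depending only on the inner block. The bound you are trying to prove is precisely the $k=0$ (worst) case of the resulting family of bounds. Without the cascade, you have no guarantee that the equation is solvable, and without the use of $G'$ (and its fixing of the inner indices) you have no single formula covering the whole outer block, only one identity per tuple $y$. Your counting at the end of Phase 2 is in the right spirit — $\binom{d}{s}$ bipartition shapes, $(m-1)^{d-s}$ choices of frame indices, taking the side of size $s\le\lfloor d/2\rfloor$ — but without the mechanism producing an explicit decomposition it remains a heuristic rather than a proof.
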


We note that in the case of matrices, the determinant of the
top-left submatrix is such a polynomial, and we may hence
take $m=r+1$. This yields $4r$, almost recovering the bound
$3r$ discussed earlier.

In the case of high-characteristic fields, we may deduce from Theorem
\ref{thm: Bounds on subtensors} an analogue for polynomials. If $P$
is a homogeneous polynomial in several variables over a field $K$ and
with degree at least $2$, then we let $\polyrank P$ be the smallest positive
integer $k$ such that we may write \[ P = Q_1 R_1 + \dots + Q_k R_k
\] for some homogeneous polynomials $Q_i, R_i$ satisfying $\deg P_i,
\deg Q_i < \deg P$ and $\deg Q_i + \deg R_i = \deg P$ for each $i \in
[k]$. This notion of rank is known as the {\em Schmidt rank} or
{\em strength} of $P$.

For every subset $U \subset [n]$, we write $P[U]$ for the polynomial
in $K[x_u | u \in U]$ obtained by substituting in the polynomial $P$
the value $0$ for all variables in $[n] \setminus U$.

\begin{thm} \label{thm: Bounds on restricted polynomials} Let $K$ be a
field, let $d \geq 2$, $r \ge 1$ be positive integers and set $D:=\binom{d}{\lfloor d/2\rfloor} \leq
2^d$. Assume that $\characteristic K=0$ or $\characteristic K >d$.
If $P$ is a homogeneous polynomial in variables $x_1, \dots, x_n$ over
$K$ with $\deg P = d$ that satisfies $\polyrank P[U] \le r$ for every subset $U
\subset [n]$ with size at most $dF_d(r \cdot D)$, then 
\[\polyrank P \le G_d(r \cdot D) .\] \end{thm}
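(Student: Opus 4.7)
The plan is to reduce Theorem \ref{thm: Bounds on restricted polynomials} to Theorem \ref{thm: Bounds on subtensors} via the standard dictionary between homogeneous polynomials and symmetric multilinear forms. Since $\characteristic K = 0$ or $\characteristic K > d$, I can polarize $P$ to obtain a symmetric tensor $T_P \in (K^n)^{\otimes d}$ satisfying $P(x) = T_P(x, \dots, x)$. I will use two standard comparisons between Schmidt rank and partition rank: first, $\polyrank P \le \partrank T_P$, obtained immediately by specializing any partition-rank decomposition of $T_P$ along the diagonal; and second, $\partrank T_P \le D \cdot \polyrank P$, obtained by polarizing each product $Q_i R_i$ appearing in a Schmidt decomposition of $P$. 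Writing $T_{Q_i R_i}$ as the symmetrization of $T_{Q_i} \otimes T_{R_i}$ realizes it as an average over $\binom{d}{\deg Q_i} \le D$ partition-rank-one summands, and the required division by $\binom{d}{\deg Q_i}$ is legitimate precisely under the characteristic hypothesis.

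Next I will note that restriction of polynomials corresponds to restriction of tensors: for any $U \subseteq [n]$, the symmetric form associated with $P[U]$ equals the subtensor $T_P[U, \dots, U]$. This follows from the uniqueness of polarization once one checks the identity $P[U](x) = T_P[U, \dots, U](x, \dots, x)$, which is immediate from the fact that $T_P[U, \dots, U]$ is the symmetric tensor obtained by zeroing out entries with any index outside $U$.

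With these two ingredients in hand, the argument is direct. Fix arbitrary subsets $X_1, \dots, X_d \subseteq [n]$, each of size $F_d(rD)$, and let $U = X_1 \cup \dots \cup X_d$, so that $|U| \le dF_d(rD)$. By hypothesis $\polyrank P[U] \le r$, and the second comparison then yields $\partrank T_P[U, \dots, U] = \partrank T_{P[U]} \le Dr$; since $T_P[X_1, \dots, X_d]$ is obtained by a further restriction, its partition rank is also at most $Dr$. Applying Theorem \ref{thm: Bounds on subtensors} to $T_P$ with parameter $rD$ produces $\partrank T_P \le G_d(rD)$, and so $\polyrank P \le \partrank T_P \le G_d(rD)$, as required.

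No real obstacle is expected here. The only substantive ingredient is the comparison $\partrank T_P \le D \polyrank P$, whose factor $D$ is precisely what forces the parameter $rD$ to appear in the conclusion; both this comparison and the restriction compatibility $T_{P[U]} = T_P[U, \dots, U]$ rely on polarization being well-behaved, which is exactly why the assumption $\characteristic K = 0$ or $\characteristic K > d$ is imposed.
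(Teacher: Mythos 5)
Your proposal is correct and follows essentially the same route as the paper's proof: both pass through the (de)polarization maps between $S^d K^n$ and $(K^n)^{\otimes d}$, use the two comparisons $\polyrank P \le \partrank T_P$ and $\partrank T_P \le D\cdot\polyrank P$ (the latter producing the factor $D$), observe that restriction of $P$ to a coordinate subset $U$ corresponds to taking the subtensor $T_P[U,\dots,U]$, and then cover an arbitrary subtensor $T_P[X_1,\dots,X_d]$ by the diagonal subtensor indexed by $U = X_1\cup\cdots\cup X_d$ before invoking Theorem~\ref{thm: Bounds on subtensors} with parameter $rD$. The only cosmetic difference is that the paper works with the unnormalized symmetrization $\psi$ (so $\phi\circ\psi = d!\cdot\mathrm{id}$), while you use the normalized polarization $T_P$; both are legitimate under the characteristic hypothesis.
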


\subsection{Relations to the literature}
The main results and techniques present paper may be contrasted to those
of existing works in the literature. A general framework for studying
restriction-closed properties had been started in \cite{Draisma17},
but the techniques in that paper assume that the property is
Zariski-closed, which as we have explained does not appear to be the
case for the set of tensors in $K^{n_1} \otimes
\cdots \otimes K^{n_d}$ with partition rank at most $r$. The more recent work
\cite{Blatter22b} assumes that the field is finite.

High-rank subtensors were also studied in \cite{Karam22}, using very different 
arguments: Theorem 4.1 from that paper there is similar to (our) Theorem 
\ref{thm: Bounds on subtensors}, but assumes that the field is finite, an 
assumption which is heavily used in the proof, through a connection between 
the partition rank and the analytic rank; although the qualitative part of 
Theorem \ref{thm: Bounds on subtensors} follows as a special case of the first 
main theorem stated in the introduction of that paper, polynomial bounds in the 
functions $F_d$ and $G_d$ are a novelty of the present paper.

Again letting aside the matter of the bounds, one can deduce from a
universality theorem of Kazhdan and Ziegler \cite{Kazhdan20} a qualitative
version of Theorem \ref{thm: Bounds on restricted polynomials} where
the assumption is replaced by the requirement that every image $T'
\in K^{F_d(r)} \otimes \cdots \otimes K^{F_d(r)}$ of $T$ under any
$d$-tuple of linear transformations has partition rank at most $r$.
This condition is much stronger than our requirement on subtensors.
A general method for passing from a result about linear maps to a result
about subtensors, which involves fundamental results about finitely
generated $\mathbf{FI}$-modules, is described in \cite{Blatter22b}.

Let us finally mention a paper of Bri\"et and Castro-Silva
\cite{Briet22} on random restrictions of tensors and polynomials, which provides 
an additional motivation for the present line of work. Although none of their 
results imply ours or the other way around, they identify linear bounds in Theorem 
\ref{thm: Bounds on subtensors} and in Theorem \ref{thm: Bounds on restricted polynomials} 
as respectively providing a natural route to recover a random restriction 
theorem for tensors and for polynomials. Our proof of Theorem 
\ref{thm: Bounds on restricted polynomials} shows that linear bounds in 
Theorem \ref{thm: Bounds on subtensors} would suffice to obtain linear bounds 
in Theorem \ref{thm: Bounds on restricted polynomials}.

\subsection{Organisation of this paper}

In Section~\ref{sec:RankBound} we prove Theorem~\ref{thm:Bound},
in Section~\ref{sec:DegreeBound} we find a bound on $m$ in
Theorem~\ref{thm:Bound} in terms of $r$ and derive Theorem~\ref{thm:
Bounds on subtensors}. In Section~\ref{sec:ThreeTensors} we use
classical invariant theory to derive a slightly better bound on $m$
in the special case of $d=3$, and in Section \ref{sec:Polynomials} we deduce 
Theorem \ref{thm: Bounds on restricted polynomials} from Theorem 
\ref{thm: Bounds on subtensors}.

\section*{Acknowledgements}  We thank Jop Bri\"et for useful discussions, 
and Harm Derksen for suggesting to us the argument in Section \ref{sec:ThreeTensors}.

\section{Proof of Theorem~\ref{thm:Bound}}
\label{sec:RankBound}

We begin with the following elementary observation; this is inspired by
Snowden's alternative proof in \cite{Snowden21} of many of the results
in \cite{Bik21} by working with equations of weight
$(1,\ldots,1)$. 

\begin{lm}
We may assume that the polynomial $f$ in Theorem \ref{thm:Bound} lies in 
\[ \CC[x_{i_1,\ldots,i_d} \mid i_j \in [m]], \]
has coefficients in $\ZZ$ with gcd $1$, and has weight
$(1^m,\ldots,1^m)$ for the torus $((\CC^*)^m)^d$.
\end{lm}

\begin{proof}
Since tensors of partition rank at most $r$ are the image of a map defined
over $\ZZ$, we may assume that $f$ has integer coefficients. 
Since they are preserved by coordinate
scalings, we may further assume that $f$ is a weight vector, i.e., $f$
gets scaled by $(t_1^{\alpha_1},\ldots,t_d^{\alpha_d})$, for certain
$\alpha_i \in \ZZ_{\geq 0}^{n_i}$, when the tensor gets
acted upon by
$(\diag(t_1),\ldots,\diag(t_d)) \in \prod_{i=1}^d \GL_{n_i}(\CC)$. Now
if the $j$-th entry of $\alpha_i$ is strictly greater than $1$, then acting with the Lie
algebra element $E_{j,n_i+1}$ on $f$ we get another polynomial
that vanishes on tensors of partition rank $r$ and which has weight
$(\alpha_1,\ldots,\alpha_i',\ldots,\alpha_d)$, where
\[
\alpha_i'=(\alpha_{i1},\ldots,\alpha_{ij}-1,\ldots,\alpha_{in_i},1)
\in \ZZ_{\geq 0}^{n_i+1}.
\]
We replace $n_i$ by $n_i+1$ and $\alpha_i$ by $\alpha_i'$.
Continue in this manner until all $\alpha_i$ only have $1$s and $0$s as
entries. The $0$s correspond to slices of variables that do not occur
in $f$. After removing these, $f$ has weight $(1^{n_1},\ldots,1^{n_d})$,
where we note that the $n_i$ may have changed. Each
variable has weight $(e_{j_1},\ldots,e_{j_d})$ for some $j_i
\in [n_i]$, and it follows that all $n_i$ are equal to a
common number $m$. Finally, divide the resulting $f$ by an
integer to ensure that the coefficients have gcd $1$. 
\end{proof}

Now consider the image of $f$ in $K[x_{i_1,\ldots,i_d} \mid i_1,\ldots,i_d
\in [m]]$. This is nonzero, since the coefficients of $f$ have gcd $1$,
and it is still a weight vector of weight $(1^m,\ldots,1^m)$. From now on,
we write $f$ for the image. Note that $f$ vanishes on tensors in $K^{m}
\otimes \ldots \otimes K^{m}$ of partition rank at most $r$.

Since $f$ has weight $(1^m,\ldots,1^m)$, after applying
permutations in the $d$ directions, we can write
\[ f=:h_0=x_{m,\ldots,m} h_1 + r_1 \]
where $h_1$ is a nonzero weight vector of weight
$(1^{m-1}0,\ldots,1^{m-1}0)$ and where $r_1$ is a weight vector of
weight $(1^m,\ldots,1^m)$ that does not involve $x_{m,\ldots,m}$. Note that
the variables in $h_1$ have all indices at most $m-1$, and all variables
in $r_1$ have at least one index at most $m-1$.

Similarly, after further permutations on
the first $(m-1)$ indices, for $k=1,\ldots,m-1$ we have 
\[ h_k=x_{m-k,\ldots,m-k} h_{k+1} + r_{k+1} \]
where $h_{k+1}$ is a nonzero weight vector of weight
$(1^{m-k-1}0^{k+1},\ldots,1^{m-k-1}0^{k+1})$ and $r_{k+1}$ a weight
vector of weight $(1^{m-k}0^k,\ldots,1^{m-k}0^k)$ that does not involve
$x_{m-k,\ldots,m-k}$. Note that $h_m$ is a nonzero constant and that
$r_m=0$. Furthermore, for each $i=1,\ldots,d$, every term of $r_{k+1}$
contains precisely one variable that has an index $m-k$ on position
$i$. These $d$ indices $m-k$ are distributed over at least two and at
most $d$ of the variables in the term. So $r$ is a linear combination
of terms of the following form (illustrated for $d=4$):
\[ x_{m-k,i_2,i_3,i_4} \cdot x_{j_1,m-k,m-k,j_4} \cdot 
x_{l_1,l_2,l_3,m-k} \]
where $i_2,\ldots,l_3 \in [m-k-1]$ and where the coefficients are
polynomials in the variables all of whose indices are in $[m-k-1]$.

\begin{prop} \label{prop:Bound}
Let $n_1,\ldots,n_d \geq m$ be positive integers and let $T \in K^{n_1} \otimes \cdots \otimes
K^{n_d}$. Suppose that, for some $k=0,\ldots,m-1$, the whole $G:=\prod_i
\Sym([n_i])$-orbit of $h_k$ vanishes at $T$, but not the whole $G$-orbit
of $h_{k+1}$ vanishes at $T$. Then $T$ has partition rank at most
\begin{equation}  \label{eq:Bound}
d(m-k-1) + \sum_{s=1}^{\lfloor d/2 \rfloor} \binom{d}{s}
(m-k-1)^{d-s}. 
\end{equation}
\end{prop}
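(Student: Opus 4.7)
The plan is to use the failure of the entire $G$-orbit of $h_{k+1}$ to vanish on $T$ to produce a non-zero scalar, and then, via the recursion $h_k = x_{m-k,\ldots,m-k} h_{k+1} + r_{k+1}$ combined with the vanishing of the $G$-orbit of $h_k$, express each generic entry of $T$ as an explicit polynomial expression in ``boundary'' entries. Splitting $T$ accordingly into a boundary and an interior piece and applying subadditivity of partition rank will yield the two summands in \eqref{eq:Bound}.

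By hypothesis, there exist ordered subsets $S_i \subseteq [n_i]$ of size $m-k-1$ such that $c := h_{k+1}(T|_{S_1,\ldots,S_d}) \neq 0$. Decompose $T = T_\partial + T_\circ$, where $T_\partial$ is the part supported on the union of the $d$ slabs $\{(a_1,\ldots,a_d) : a_i \in S_i\}$ and $T_\circ$ is the part supported on $\prod_i S_i^c$. Each slab is a sum of $m-k-1$ coordinate slices of partition rank at most one, so $\partrank(T_\partial) \leq d(m-k-1)$ by subadditivity. For $(a_1,\ldots,a_d) \in \prod_i S_i^c$, evaluating $h_k$ on the $(S_1 \cup \{a_1\},\ldots,S_d \cup \{a_d\})$-subtensor yields
\[
T(a_1,\ldots,a_d) = -c^{-1} \cdot r_{k+1}\bigl(T|_{S_1 \cup \{a_1\},\ldots,S_d \cup \{a_d\}}\bigr).
\]
By the structural description of $r_{k+1}$, each surviving monomial on the right corresponds to a set partition $\{I_1,\ldots,I_s\}$ of $[d]$ with $s \geq 2$ parts and contributes $\mathrm{const}(T,S_\bullet) \cdot \prod_{\ell=1}^s T(b_{\ell,1},\ldots,b_{\ell,d})$, where $b_{\ell,i} = a_i$ if $i \in I_\ell$ and $b_{\ell,i} \in S_i$ otherwise.

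The key combinatorial step is to assign to each such set partition a distinguished part $I$ of size at most $\lfloor d/2 \rfloor$, which exists because $s \geq 2$ and the parts sum to $d$; this coarsens the partition to the two-part split $\{I, I^c\}$ while preserving the pure-tensor factorization of each term. For each $I \subseteq [d]$ with $1 \leq |I| \leq \lfloor d/2 \rfloor$, let $U_I$ be the aggregated contribution whose distinguished part equals $I$; then $U_I$ is a sum of terms of the form $f(a_I) \cdot g(a_{I^c})$ with $f$ lying in the span of the vectors $a_I \mapsto T(a_I, s_{I^c})$ indexed by $s_{I^c} \in \prod_{i \in I^c} S_i$. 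This span has dimension at most $(m-k-1)^{d-|I|}$, so $\partrank(U_I) \leq (m-k-1)^{d-|I|}$. Summing over the $\binom{d}{s}$ subsets of each allowed size and combining with the bound on $T_\partial$ yields exactly \eqref{eq:Bound}.

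The main obstacle I anticipate is the bookkeeping in the regrouping step: one must designate a canonical distinguished part per set partition to avoid double-counting across the various $I$, and one must verify that the needed rank-one factorization survives the coarsening $\{I_1,\ldots,I_s\} \mapsto \{I, I^c\}$. The latter is automatic, since any pure tensor product with respect to a finer partition remains a pure tensor product with respect to any coarsening of it; the former is purely combinatorial and can be settled by fixing once and for all a tie-breaking rule among parts of minimal size.
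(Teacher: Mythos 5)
Your proposal is correct and follows essentially the same route as the paper: you isolate a nonvanishing restriction of $h_{k+1}$, use $h_k = x_{m-k,\ldots,m-k}h_{k+1} + r_{k+1}$ together with the vanishing $G$-orbit of $h_k$ to express the interior entries of $T$ in terms of boundary data, and then bound the partition rank of the interior piece by counting the possible low-order factors (grouped by the distinguished part $I$) and the boundary piece by a slice-rank argument. The only cosmetic difference is that the paper normalizes $S_i = [m-k-1]$ by a permutation and propagates the identity via the subgroup $G' = \prod_i \Sym([n_i]\setminus[m-k-1])$, whereas you work with arbitrary $S_i$ and evaluate directly; the counting $\sum_{s=1}^{\lfloor d/2\rfloor}\binom{d}{s}(m-k-1)^{d-s}$ is reached by the same span/dimension bookkeeping in both.
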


\begin{proof}
Without loss of generality, we may assume that $h_{k+1}(T)$ is nonzero
and the whole $G$-orbit of $h_k$ is zero on $T$. We then have
\begin{equation} \label{eq:T} t_{m-k,\ldots,m-k} =
-r_{k+1}(T)/h_{k+1}(T). \end{equation}
For instance, if $d=4$, then $t_{m-k,\ldots,m-k}$ is a linear combination
of terms such as 
\[ t_{m-k,i_2,i_3,i_4} \cdot t_{j_1,m-k,m-k,j_4} \cdot  
t_{l_1,l_2,l_3,m-k} \]
where the coefficients only depend on the subtensor
$T[[m-k-1],\ldots,[m-k-1]]$. Since the whole $G$-orbit of
$h_k$ vanishes on $T$, we may apply to \eqref{eq:T} arbitrary elements
of the subgroup $G':=\prod_{i=1}^d \Sym([n_i] \setminus
[m-k-1])$ of $G$. Note that this fixes all indices up to $m-k-1$. As a consequence, 
we find that the subtensor 
\[ T[[n_1] \setminus [m-k-1],\ldots,[n_d] \setminus [m-k-1]]
\]
of $T$ admits a decomposition as a sum of tensor products of
tensors in which every term is
divisible by some tensor like (for $d=4$)
\[ (t_{j_1,a_2,a_3,j_4})_{a_2 \in [n_2] \setminus [m-k-1], a_3
\in [n_3] \setminus [m-k-1]}. \]
for some choice of $(j_1,j_4) \in [m-k-1]^2$. In each term
of this decomposition, there is at least one factor which is a tensor
of order at most $\lfloor d/2 \rfloor$.
The number of these tensors equals 
\[ \sum_{s=1}^{\lfloor d/2 \rfloor} \binom{d}{s}
(m-k-1)^{d-s}. \]
where $\binom{d}{s}$ counts the choices of positions $i$
where one puts the indices varying in $[n_i] \setminus [m-k-1]$ (positions $2,3$
in the example) and the factor $(m-k-1)^{d-s}$ counts the number of
choices for the remaining indices ($j_1,j_4$ in the example).

Finally, the remainder of $T$ admits a {\em slice rank} decomposition
where each term is divisible by some standard basis vector in $K^{m-k-1}$,
in one of the $d$ factors. This is accounted for by the first term in
\eqref{eq:Bound}.
\end{proof}

\begin{proof}[Proof of Theorem~\ref{thm:Bound}.]
By construction of $f=h_0$, its entire $G$-orbit vanishes on the given
tensor $T$. On the other hand, $h_m$ is a nonzero constant. Hence there exists
a $k \in \{0,\ldots,m-1\}$ such that the entire $G$-orbit of $h_k$
vanishes at $T$ but not the entire $G'$-orbit of $h_{k+1}$ vanishes
at $T$. Hence Proposition~\ref{prop:Bound} applies with this
$k$.  Now the bound
in Theorem~\ref{thm:Bound} follows from the bound in \eqref{eq:Bound}
by taking the worst $k$, namely, $k=0$. 
\end{proof}

\section{A degree bound for $f$ and proof of
Theorem~\ref{thm: Bounds on subtensors}}
\label{sec:DegreeBound}

\begin{thm} \label{thm:Boundm}
Let $d \ge 2$ be a positive integer. Then for every positive integer $r \ge 1$ and for $n=8r$, there exists a polynomial $f$ with degree at most 
$m \le (2^{d+3}r)^{2d}$ in $\CC[x_{i_1,\ldots,i_d} \mid i_j \in [n]]$ that 
vanishes on all tensors in $\CC^{n} \otimes \cdots \otimes \CC^{n}$ ($d$ factors) 
with partition rank at most $r$.
\end{thm}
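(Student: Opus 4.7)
I will prove Theorem~\ref{thm:Boundm} by a dimension count in the space of polynomials. The set $V_r$ of partition-rank-at-most-$r$ tensors in $\CC^n \otimes \cdots \otimes \CC^n$ is the union, over all $r$-tuples $(I_1, \ldots, I_r)$ of proper subsets of $[d]$ containing $1$, of the image of the polynomial map
\[
\phi_{I_1,\ldots,I_r}\colon \prod_{j=1}^r \Bigl( \bigotimes_{i \in I_j} \CC^n \times \bigotimes_{i \notin I_j} \CC^n \Bigr) \to \bigotimes_{i=1}^d \CC^n, \qquad ((A_j,B_j))_j \mapsto \sum_{j=1}^r A_j \otimes B_j.
\]
Each such map is bilinear in each pair $(A_j,B_j)$, and therefore homogeneous of degree $2$ in the source variables; its source has dimension at most $s := 2r n^{\lceil d/2 \rceil}$. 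The number of such tuples is bounded by $L := (2^{d-1})^r$.

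Set $N := n^d$. The linearization $f \mapsto (f \circ \phi_{I_1,\ldots,I_r})_{(I_1,\ldots,I_r)}$ sends polynomials of degree at most $m$ in $N$ variables into a space of dimension at most $L \cdot \binom{s+2m}{s}$. By the elementary kernel-existence lemma, if
\[
\binom{N+m}{N} > L \cdot \binom{s+2m}{s},
\]
then there exists a nonzero $f$ of degree at most $m$ with $f \circ \phi_{I_1,\ldots,I_r} = 0$ for every tuple, and such an $f$ automatically vanishes on $V_r$. For $n = 8r$ and $d \geq 2$ the ratio $N/s = n^{\lfloor d/2 \rfloor}/(2r) \geq 4$, so in particular $N > s$. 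I will verify that already $m := N = (8r)^d$ satisfies the inequality above, and conclude by the elementary estimate $(8r)^d \leq (2^{d+3}r)^{2d}$.

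\textbf{Main obstacle.} The only nontrivial step is verifying the binomial inequality when $m = N$, because of the exponential factor $L$. Using $\binom{2N}{N} \geq 4^N/(2N)$ together with $\binom{s+2N}{s} \leq (10N/s)^s \leq 40^{N/4}$ (valid because $s \leq N/4$, so $10N/s$ is bounded and $x \mapsto x\log(10N/x)$ is increasing on $[0,N/4]$), the ratio of the two sides grows at least like $(4/40^{1/4})^N \geq 1.5^N$, which overwhelms $L = 2^{(d-1)r}$ by a wide margin as soon as $N \geq (8r)^d$ and $d, r \geq 1$. The real content of the argument is therefore the dimension inequality $N > s$ for $n = 8r$; the binomial estimate is routine and would yield a much smaller value of $m$, the bound $(2^{d+3}r)^{2d}$ being retained simply because it packages cleanly when fed into Theorem~\ref{thm:Bound}.
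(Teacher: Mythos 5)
Your approach — a dimension count showing the pullback map on polynomials must have a nonzero kernel — is the same as the paper's, but you organize the parametrization differently, and the difference is real. The paper covers all low-rank tensors by a \emph{single} polynomial map whose source is the product over all $2^d-2$ proper nonempty $I \subseteq [d]$ of $r$ copies of $\CC^{n^{|I|}} \times \CC^{n^{d-|I|}}$; this gives a single source of dimension $S_{n,r}\leq 2^{d+1}rn^{d-1}$, and to make $S_{n,r}\leq n^d/4$ the paper needs $n = 2^{d+3}r$ (the ``$n=8r$'' in the statement appears to be a typo for $2^{d+3}r$, which is what the paper's proof actually uses). You instead split the low-rank locus into $L = 2^{(d-1)r}$ images, one per $r$-tuple $(I_1,\ldots,I_r)$, each from a source of dimension at most about $2rn^{d-1}$, and pay for the split with a factor $L$ on the target side. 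Since $L$ enters only additively on a logarithmic scale while the binomial gap is exponential in $N = n^d$, this comfortably absorbs the multiplicity, lets you keep $n=8r$, and in fact yields the smaller degree bound $m = (8r)^d \leq (2^{d+3}r)^{2d}$. This is a clean variant and arguably an improvement in the constants.

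One computational slip: you claim the source of $\phi_{I_1,\ldots,I_r}$ has dimension at most $s := 2rn^{\lceil d/2\rceil}$. Each factor $\bigotimes_{i\in I_j}\CC^n \times \bigotimes_{i\notin I_j}\CC^n$ has dimension $n^{|I_j|} + n^{d-|I_j|}$, and as a function of $a=|I_j|\in\{1,\ldots,d-1\}$ the quantity $n^a + n^{d-a}$ is \emph{maximized} at the endpoints $a=1$ or $a=d-1$ (it is minimized near $a=d/2$), so the correct bound is $s \leq r(n+n^{d-1}) \leq 2rn^{d-1}$, and consequently $N/s = n/(2r) = 4$ rather than $n^{\lfloor d/2\rfloor}/(2r)$. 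Fortunately your argument only uses $s \leq N/4$, so the bound $N/s\geq 4$ you need still holds exactly at $n = 8r$ and the rest of the verification of $\binom{2N}{N} > L\binom{s+2N}{s}$ goes through unchanged. You should fix the stated formula for $s$ so the claimed ratio matches what the estimate actually delivers.
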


\begin{proof}
We write $X_{n,r}$ for the set of order-$d$ tensors in $\CC^n
\otimes \cdots \otimes \CC^n$  with partition
rank at most $r$. The set $X_{n,r}$ is contained in the set $X_{n,r}'$ of order-$d$ 
tensors $T$ that have a partition
rank decomposition of the type
\begin{equation} T = \sum_{I \in \mathcal{P}([d]) \setminus \{\emptyset, [d]\}} \sum_{i=1}^{r}
A_{I,i} \otimes B_{I^c,i} 
\label{partition rank decomposition}
\end{equation}
for certain tensors $A_{I,i} \in \bigotimes_{j \in I} \CC^n$
and $B_{I^c,i} \in \bigotimes_{j \in I^c} \CC^n$.

Note that we could take the summation over half of these $I$, but for simplicity 
we will not do so here. From now on, the range of $I$ in summations, indexations etc. 
will always be taken to be $\mathcal{P}([d]) \setminus \{\emptyset, [d]\}$ unless indicated 
otherwise, where $\mathcal{P}([d])$ is the power set of $[d]$.

Let $\pi_{r} =  \prod_{I} (\CC^{n^I} \times \CC^{n^{I^c}})^{r}$ be the parameter space 
for decompositions as in \eqref{partition rank decomposition}.

We let $P_{2m}(\pi_{r})$ be the vector space of
homogeneous polynomials of degree $2m$ in the variables $A_{I,i}, B_{I,i}$
and let $P_{m}(\CC^{[n]^d})$ be the linear space of
homogeneous polynomials of degree $m$ in the $n^d$ entries of tensors of $\CC^{[n]^d}$.
Letting $\phi: \pi_{r} \to \CC^{[n]^d}$ be the
parametrisation defined by taking \[\phi(A_{I,i}, B_{I,i}:
I, 1 \le i \le r)\] to be the right-hand side of
\eqref{partition rank decomposition}, the pull-back
$\phi^\#: P_{m}(\CC^{[n]^d}) \to P_{2m}(\pi_{r})$
with $\phi^\#(f) = f \circ \phi$ is well-defined.

If $f$ in an element of $\ker \phi^\#$, then $f$ vanishes on
the image $X_{n,r}'$ of $\phi$, so it now suffices to
check that $\ker \phi^\# \neq \{0\}$. In turn, to show this, it suffices to 
show \[ \dim P_{2m}(\pi_{r}) < \dim P_{m}(\CC^{[n]^d}).\]

Defining $S_{n,r} = \sum_I r (n^{|I|} + n^{d-|I|}) =
\dim \pi_{r}$, we can write \begin{align*} \dim P_{2m}(\pi_{r}) & = 
\binom{2m + S_{n,r} - 1}{2m}\\ & = \binom{2m + S_{n,r} - 1}{S_{n,r} - 1} \\ 
& \le (2m + S_{n,r})^{S_{n,r}-1} / (S_{n,r}-1)!.\end{align*}

Meanwhile \begin{align*} \dim P_{m}(\CC^{[n]^d}) & = \binom{m+n^d-1}{m} \\ 
& = \binom{m+n^d-1}{n^d-1} \\ & \ge m^{n^d-1} / (n^d-1)!.\end{align*}

It hence suffices to show \begin{equation} m^{n^d-1} / (2m + S_{n,r})^{S_{n,r}-1} 
> (n^d-1)! / (S_{n,r}-1)! \label{compared quantities}. \end{equation} 

For every $1 \le |I| \le d-1$ we have $n^{|I|} + n^{d-|I|} \le 2
n^{d-1}$, so $S_{n,r} \le 2^{d+1}rn^{d-1}$. Assuming that $m \ge n^d/4 \ge 3$
and $n=2^{d+3}r$, the left-hand side of \eqref{compared quantities} is therefore at least 
\begin{align*} m^{n^d-1} / (2m+2^{d+1}rn^{d-1})^{2^{d+1}rn^{d-1}-1} & \ge 
m^{n^d-1} / (2m+n^d/4)^{n^d/4-1} \\ & \ge m^{n^d-1} / (3m)^{n^d/4-1} \\ & \ge m^{n^d/2}.\end{align*}

Meanwhile, the right-hand side of \eqref{compared quantities} is at
most its numerator, and hence at most $(n^d)^{n^d}$. 
Therefore, for \eqref{compared quantities} to hold, it suffices that 
\[m^{n^d/2} > (n^d)^{n^d},\] 
which simplifies to $m \ge n^{2d}$. Since $n = 2^{d+3}r$, it suffices that 
$m \ge (2^{d+3}r)^{2d}$ for there to exist a polynomial $f$ which is zero on 
$X_{n,r}'$ and hence on $X_{n,r}$.

\end{proof}

\begin{proof}[Proof of Theorem~\ref{thm: Bounds on subtensors}]
Take $F_d(r)=m:=(2^{d+3}r)^{2d}$. By Theorem~\ref{thm:Boundm}, there exists
a nonzero polynomial $f$ of degree $m$ that vanishes on order-$d$ tensors
of partition rank $r$, and by Theorem~\ref{thm:Bound}, any $n_1 \times
\cdots \times n_d$-tensor all of whose 
$m \times \cdots \times m$-subtensors have partition rank at most $r$,
has itself partition rank at most $m^d=(2^{d+3}r)^{2d^2}=:G_d(r)$, as desired. 
Here we used $d \geq
3$ in the last step, but as discussed in the beginning of the paper,
for $d=2$ even much better bounds work.

Finally, note that if some $n_i$ happens to be smaller than $m$, then
the tensor has partition rank at most $m < G_d(r)$.
\end{proof}

\section{Order-$3$ tensors and invariant theory}
\label{sec:ThreeTensors}

In this section, we focus on $d=3$ and follow a construction suggested to
us by Harm Derksen. In this case, partition rank equals slice rank. The
following is well-known, but we include a quick proof. 

\begin{lm} \label{lm:Nullcone}
The tensors $T \in \CC^n \otimes \CC^n \otimes \CC^n$ of slice rank
strictly less than $n$ are contained in the nullcone for the action of
the group $G:=\SL_n \times \SL_n \times \SL_n$.
\end{lm}

Here the nullcone is the set of all vectors on which all
$G$-invariant polynomials vanish.

\begin{proof}
For such a tensor there exist linear subspaces $V_1,V_2,V_3 \subseteq \CC^n$
with $\dim(V_1)+\dim(V_2)+\dim(V_3) < n$ such that
\[ T \in V_1 \otimes \CC^n \otimes \CC^n 
+ \CC^n \otimes V_2 \otimes \CC^n
+ \CC^n \otimes \CC^n \otimes V_3. \]
After linear coordinate changes in the individual tensor factors, we
may assume that $V_i$ is spanned by the first $n_i$ basis vectors. Now
consider the triple $\lambda:=(\lambda_1,\lambda_2,\lambda_3)$ of
$1$-parameter subgroups in $\SL_n$ defined by
\[
\lambda_i(t)=\diag(t^{(n-n_i)},\ldots,t^{(n-n_i)},t^{-n_i},\ldots,t^{-n_i})
\] 
where there are $n_i$ copies of the $t^{(n-n_i)}$ and $n-n_i$ copies of
$t^{n_i}$, so that $\det(\lambda_i(t))=1$ as desired. Then one sees that
$\lambda(t)$ acts by some power $t^a$ on each standard basis vector in
$\CC^n \otimes \CC^n \otimes \CC^n$, and that $a \geq n-n_1-n_2-n_3>0$
for all basis vectors that have a nonzero coefficient in $T$. Hence
$\lambda(t) T \to 0$ for $t \to 0$ and all $G$-invariant polynomials
vanish on $T$.
\end{proof}

For the following result we refer to \cite{Buergisser17},
where this invariant is called $F_k$. 

\begin{prop} \label{prop:Kronecker}
If $n=k^2$, then there exists a nonzero, homogeneous, $G$-invariant
polynomial on $\CC^n \otimes \CC^n \otimes \CC^n$ of degree
$k^3$. \hfill $\square$
\end{prop}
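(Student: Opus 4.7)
The plan is to identify the space of degree-$d$ $\SL_n^{\times 3}$-invariants on $(\CC^n)^{\otimes 3}$ with a Kronecker coefficient at a rectangular partition, and then to verify that the relevant Kronecker coefficient is nonzero. Concretely, I would apply the Cauchy-type decomposition for the symmetric powers of a tensor product to write
\[
\mathrm{Sym}^d \bigl( (\CC^n)^{\otimes 3} \bigr)^{\!*} \;\cong\; \bigoplus_{\lambda,\mu,\nu \,\vdash\, d} g(\lambda,\mu,\nu)\; S^\lambda (\CC^n)^{\!*} \otimes S^\mu (\CC^n)^{\!*} \otimes S^\nu (\CC^n)^{\!*}
\]
as an $\SL_n^{\times 3}$-module, where $S^\lambda$ denotes the Schur functor and $g(\lambda,\mu,\nu)$ is the Kronecker coefficient.

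\textbf{Reduction to a Kronecker coefficient.} A factor $S^\lambda (\CC^n)^{\!*}$ restricts trivially to $\SL_n$ precisely when $\lambda$ is a rectangular partition $(r^n)$ with exactly $n$ rows, in which case $S^\lambda(\CC^n)^{\!*}$ is a power of the inverse of the determinant of the defining representation. Consequently, the dimension of the space of $\SL_n^{\times 3}$-invariants in degree $d$ equals $g((r^n),(r^n),(r^n))$ if $n \mid d$ with $r = d/n$, and is $0$ otherwise. I would then specialise to $n = k^2$ and $d = k^3$, giving $r = k$, so that the problem reduces to verifying
\[
g\bigl((k^{k^2}),(k^{k^2}),(k^{k^2})\bigr) \geq 1.
\]

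\textbf{The main obstacle.} The crux, and the step I expect to be hard, is this final positivity statement: a non-trivial instance of Kronecker positivity for triples of rectangular partitions. It is the content of the construction given in \cite{Buergisser17}, where the resulting generator $F_k$ is produced directly by a contraction construction after identifying $\CC^{k^2}$ with $M_k(\CC)$. For an independent route I would attempt a geometric-invariant-theoretic argument: exhibit a tensor $T_0 \in (\CC^{k^2})^{\otimes 3}$ whose $\SL_{k^2}^{\times 3}$-orbit is closed and does not contain $0$---for instance, a suitably symmetrised element of $M_k^{\otimes 3}$ built from the matrix units $E_{ab}$ and the identity matrix---and then invoke the Kempf--Ness theorem to produce some non-vanishing homogeneous invariant. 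A weight-by-weight count on the factors $(\CC^{k^2})^{\!*}$ would then force the degree of a non-zero lowest-degree invariant to be a multiple of $k^3$, and a dimension count in the lowest possible degree would pin it down to exactly $k^3$.
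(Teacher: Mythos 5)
Your reduction to Kronecker positivity is correct and is the standard way to phrase the problem: the space of degree-$d$ $\SL_n^{\times 3}$-invariants on $(\CC^n)^{\otimes 3}$ has dimension $g((r^n),(r^n),(r^n))$ when $d = rn$ and is zero otherwise, so the claim is precisely $g\bigl((k^{k^2})^{\times 3}\bigr) \geq 1$. That is exactly what \cite{Buergisser17} establishes via the explicit construction of the invariant $F_k$, and the paper's ``proof'' of Proposition~\ref{prop:Kronecker} is nothing more than this citation. So your main route matches the paper's.

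Your proposed \emph{independent} route, however, has a genuine gap in the step where you try to pin down the degree. The weight-by-weight argument only yields that the degree of any nonzero $\SL_n^{\times 3}$-invariant must be a multiple of $n = k^2$, \emph{not} of $k^3$; you would get $d = rk^2$ for some $r \geq 1$ with no a priori reason that $r = k$. (And indeed $r = 1$ fails, since $g((1^n),(1^n),(1^n)) = 0$ for $n > 1$, but nothing in the GIT setup rules out, say, $r = 2$.) Moreover, Kempf--Ness applied to a tensor $T_0$ with closed nonzero orbit tells you the nullcone is proper, hence that nonzero homogeneous invariants exist in \emph{some} degree; it does not give any bound on that degree. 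So the phrase ``a dimension count in the lowest possible degree would pin it down to exactly $k^3$'' is carrying the entire burden of the proposition and is not substantiated — making good on it essentially requires the contraction construction of $F_k$ from \cite{Buergisser17} (identify $\CC^{k^2}$ with $M_k(\CC)$, raise the tensor to a tensor power, and contract against the matrix-trace form), which is where the degree $k^3$ actually comes from.
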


\begin{cor}
For $d=3$, in Theorem~\ref{thm:Bound}, we can take $m=O(r \sqrt{r})$
for $r \to \infty$.
\end{cor}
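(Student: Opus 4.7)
The plan is to combine Lemma~\ref{lm:Nullcone} with Proposition~\ref{prop:Kronecker}. The former tells us that, in the $d=3$ case, every tensor in $\CC^n \otimes \CC^n \otimes \CC^n$ of slice rank strictly less than $n$ lies in the nullcone for $G := \SL_n \times \SL_n \times \SL_n$, so that every homogeneous $G$-invariant polynomial vanishes on such a tensor. The latter supplies a nonzero homogeneous $G$-invariant polynomial of degree $k^3$ whenever $n = k^2$. Marrying these two facts will produce, for each $r$, a polynomial of degree $O(r^{3/2})$ on some $\CC^n \otimes \CC^n \otimes \CC^n$ that vanishes on all tensors of slice rank at most $r$, which is exactly what Theorem~\ref{thm:Bound} consumes.

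Concretely, given $r$, I would set $k := \lceil \sqrt{r+1} \rceil$ and $n := k^2$, so that $n \geq r+1$ while $k = O(\sqrt{r})$ as $r \to \infty$. Proposition~\ref{prop:Kronecker} then furnishes a nonzero homogeneous $G$-invariant polynomial $f$ of degree $m := k^3 = O(r^{3/2})$ on $\CC^n \otimes \CC^n \otimes \CC^n$. This matching of $n$ to a perfect square is the only real choice to make, and it costs merely a constant factor in $k$ which is absorbed by the $O(\cdot)$ notation.

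Next, I would verify that $f$ satisfies the hypothesis of Theorem~\ref{thm:Bound}. Let $T \in \CC^n \otimes \CC^n \otimes \CC^n$ have slice rank at most $r$. Since $r < n$ by our choice of $n$, Lemma~\ref{lm:Nullcone} places $T$ in the nullcone for $G$. By the very definition of the nullcone, every $G$-invariant polynomial vanishes on $T$; in particular $f(T) = 0$. Thus $f$ is a nonzero polynomial of degree $m = O(r^{3/2})$ which vanishes on all slice-rank-at-most-$r$ tensors in $\CC^n \otimes \CC^n \otimes \CC^n$, so Theorem~\ref{thm:Bound} applies with this value of $m$, giving the claimed $m = O(r\sqrt{r})$.

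There is essentially no obstacle in the argument beyond citing the two prior results and aligning their parameters; the only subtlety is the rounding from $r+1$ to the nearest perfect square, which is harmless asymptotically. The content of the improvement over Theorem~\ref{thm:Boundm} is packed entirely into Proposition~\ref{prop:Kronecker}, which provides a much more efficient invariant (of degree roughly $n^{3/2}$) than the dimension count used for general $d$.
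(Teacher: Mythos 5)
Your proof is correct and follows the paper's argument exactly: choosing $k = \lceil \sqrt{r+1}\rceil$ (equivalently, the smallest $k$ with $k^2 \geq r+1$), setting $n = k^2$, invoking Proposition~\ref{prop:Kronecker} for a degree-$k^3$ invariant, and Lemma~\ref{lm:Nullcone} to see it vanishes on tensors of slice rank at most $n-1 \geq r$. Nothing to add.
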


\begin{proof}
Let $k$ be the smallest positive integer satisfying $k^2 \geq r+1$ and set
$n:=k^2$.  By Proposition~\ref{prop:Kronecker}, there exists an nonzero
$G$-invariant polynomial $f$ of degree $k^3$ on $\CC^n \otimes \CC^n
\otimes \CC^n$. By Lemma~\ref{lm:Nullcone}, this polynomial vanishes
on tensors of slice rank at most $n-1$, hence in particular in tensors of
slice rank at most $r$. Clearly, $\deg(f)=O(r \sqrt{r})$. 
\end{proof}

\section{Deduction of the restriction result for the rank of polynomials}
\label{sec:Polynomials}

In this section we deduce Theorem \ref{thm: Bounds on restricted
polynomials} from Theorem \ref{thm: Bounds on subtensors}.
We begin by establishing that we can deduce bounds on the partition
rank of a tensor from bounds on the Schmidt rank of a polynomial and
conversely.  This correspondence is well known from the work of Kazhdan
and Ziegler \cite{Kazhdan20}; we include it here for convenience of the
reader. As in the statement of Theorem \ref{thm: Bounds on restricted
polynomials} we assume that $\characteristic K = 0$ or $\characteristic K > d$.
Recall that the space of homogeneous polynomials of degree $d$ in 
$x_1,\ldots,x_n$ has a natural isomorphism to the symmetric power
$S^d K^n$, and consider the natural linear map determined by 
\[ \phi:K^n \otimes \cdots \otimes K^n \to S^d K^n,\  v_1 \otimes \cdots
\otimes v_d \mapsto v_1 \cdots v_d. \]
It clearly maps any tensor of partition rank at most $1$ to a polynomial of
Schmidt rank at most $1$, and hence, by linearity, a tensor of
partition rank at most $r$ to a polynomial of Schmidt rank at most
$r$. Conversely, we have a linear map determined by
\[ \psi:S^d K^n \to K^n \otimes \cdots \otimes K^n,\ 
v_1\cdots v_d \mapsto \sum_{\pi \in S_d} v_{\pi(1)} \otimes \cdots
\otimes v_{\pi(d)}, \]
which is a linear isomorphism to the space of {\em symmetric} tensors
in $K^n \otimes \cdots \otimes K^n$ with inverse $\phi/d!$ restricted to
that space of symmetric tensors. This maps a polynomial of Schmidt
rank at most $1$, given as $Q \cdot R$ with $Q$ of degree $e$ and $R$
of degree $d-e$, to a tensor of partition rank at most
$\binom{d}{e} \leq 
\binom{d}{\lfloor d/2 \rfloor}=:D$. Again by linearity, this map sends a
polynomial of Schmidt rank at most $r$ to a tensor of partition rank at
most $r \cdot D$.

\begin{proof}[Proof of Theorem~\ref{thm: Bounds on restricted polynomials}]
Suppose that $P$ is a homogeneous polynomial of degree $d$ in
$x_1,\ldots,x_n$ and $\polyrank(P[U]) \leq r$ for all $U \subseteq [n]$
of size at most $d F_d(r D)$. Then $\psi(P)=:T$ is a (symmetric) tensor.
Consider any $d$-tuple $U_1,\ldots,U_d$ of subsets of $[n]$, each of size
at most $F_d(r D)$, and take $U:=U_1 \cup \cdots \cup U_d$, a set of size at
most $d F_d(rD)$. Then by our assumption $\phi(T[U,\ldots,U])=d! P[U]$
has Schmidt rank at most $r$. It follows that
$T[U,\ldots,U]=\psi(P[U])$ has partition rank at most $r D$, and hence {\em a
fortiori} so does $T[U_1,\ldots,U_d]$. We conclude that $T$ itself has
partition rank at most $G_d(rD)$, and therefore $P=\phi(T)/d!$ has Schmidt rank at
most $G_d(rD)$.
\end{proof}

\bibliographystyle{alpha}
\bibliography{diffeq,draismapreprint,draismajournal}

\end{document}